\documentclass[reqno]{amsart}
\usepackage{hyperref}
\usepackage{epsfig}
\usepackage{graphicx}

\newcommand{\C}{\mathbb{C}}
\newcommand{\R}{\mathbb{R}}

\begin{document}
\title[\hfilneg \hfil   2N-dimensional  canonical systems and Applications ]
{ 2N-dimensional  canonical systems and Applications}

\author[K. R. Acharya, A. Ludu \hfilneg]
{Keshav Raj Acharya, Andrei Ludu}  

\address{ \newline
Department of Mathematics, Embry-Riddle Aeronautical University\\ 1 Aerospace Blvd\\ Daytona Beach, FL}
\email{kacharya@spsu.edu ; aludu@erau.edu}

\thanks{}
\subjclass[2000]{34A30, 34A12, 34B60 }
\keywords{Canonical systems, periodic canonical system, Floquet theory}

\begin{abstract}
We study the 2N-dimensional canonical systems and discuss some properties of its fundamental solution. We then discuss the Floquet theory of periodic canonical systems and observe the asymptotic behavior of its solution. Some important physical  applications of the systems are also discussed: linear stability of periodic Hamiltonian systems, position-dependent effective mass, pseudo-periodic nonlinear water waves, and Dirac systems.
\end{abstract}

\maketitle
\numberwithin{equation}{section}
\newtheorem{theorem}{Theorem}[section]
\newtheorem{lemma}[theorem]{Lemma}
\newtheorem{proposition}[theorem]{Proposition}
\newtheorem{corollary}[theorem]{Corollary}

\newtheorem{exa}[theorem]{Example}
\newtheorem{defi}[theorem]{Definition}

\allowdisplaybreaks

\section{Introduction}

A 2N- dimensional canonical system is a system of $2N$ first order differential equations of the form
\begin{equation} \label{ca}
Ju'(t)=z H(t)u(t), \quad z\in \mathbb{C}, \quad t\in [0, \infty)
\end{equation}
 where $ J =\begin{pmatrix} 0&-I\\ I&0 \end{pmatrix}, $ $I$ is an $N\times N$ identity matrix and
$H(t)$ is a $2N\times2N$ positive semidefinite matrix whose entries are locally integrable.   The complex number $z \in \C $ involved in \eqref{ca} is a spectral parameter and $u(t): [0,\infty) \rightarrow \C^{2N}$ is a vector-valued function. We also assume that there is no non-empty open interval $\mathbf I$ on which $H \equiv 0$ almost everywhere. For fixed $z$, a vector valued function $ u(t,z)= (u_1, u_2, \dots u_{2N})^t $ defined on a bounded interval $ [0,N] $  is called a solution of \eqref{ca} if $u$ satisfies \eqref{ca} and all the component functions $u_1, u_2, \dots, u_{2N}$ are absolutely continuous.

The theory of two dimensional canonical systems in which $H(t)$ is a $2\times 2$ positive semidefinite matrix and $J$ and $u$ are similarly defined, has been an important tool in the direct and inverse spectral theory of  second order differential operators. The systems is a generalization of the classical equations such as Schr\"odinger, Jacobi, Dirac, and Sturm-Liouville. The origin of  such systems goes back to 1960 when L. de Branges studied the systems in connection with Hilbert spaces of entire functions \cite{db}. Since then, there have been numerous studies about such systems extending the theory to many fields of mathematics, giving operator-theoretic point of view,  which can be found in a recent book  \cite{cre} and references therein. In addition to the applications in different fields of mathematics,  these systems have direct physical applications, some of which are mentioned in  this paper \cite{PV2}-\cite{3}.

The  extension of the theory of two dimensional canonical systems to higher dimensions has also been of general interest. For example, see  \cite{as}  for the spectral theory of  canonical differential systems. The extended theory generalizes the above mentioned classical equations in higher dimensions. In this paper, we  have considered the systems in 2N dimension. The goal is to discuss the properties of fundamental solution of \eqref{ca} when the matrix $ H(t) $ is periodic and non periodic. We also obtain the asymptotic behavior of a solution  of  \eqref{ca} when $H(t)$ is periodic.   

We organize the paper as follow: In section 2, we discuss the existence of solution of \eqref{ca} and some properties of fundamental matrix solution. We then discuss about the periodic canonical systems and obtain some important properties of solution. In section 3, we present  very interesting applications of these systems.  

\section{ Preliminaries and Results }

 A canonical system \eqref{ca} is called \textit{trace normed} if $\operatorname{tr} H(t) \equiv 1. $ A canonical system can be transformed into a trace normed  by the change of variable,  $$ x(t) = \int _0^t \operatorname{tr} H(s) ds .$$ Let $ \tilde H(x) = (\operatorname{tr} H(x))^{-1} H(x(t)) .$ Then  $ Ju' = z \tilde H(x) u $ is trace normed. Moreover, if $u(t)$ is a solution of \eqref{ca}, then $ \tilde u(x, z)= u(t(x), z)$ is a solution to $ Ju' = z \tilde H(x) u .$ Therefore, we will consider \eqref{ca} a \textit{trace normed} which will make the situation easier. In order to show the existence of a solution, we first  
  write the equations as an integral equation of the form
 \begin{align} \label{eq3} u(t) =   u(0)  -z \int_0^t J H(s)u(s) \ ds \end{align} and associate the Eq. \eqref{eq3} by an operator on a Banach space to  apply the Banach fixed point theorem. For this, let $z\in K$ a compact subset of $\C$ so that $|z|< R,\ R\neq 0 $. Let  $\mathbf I =[a, b] \subset [-\frac{1}{4R}, \ \frac{1}{4R} ]$  and  $C(\mathbf I,\  \C^{2N}) $, the set of all continuous vector-valued functions  defined on the interval $\mathbf I.$ Then the space $C(\mathbf I, \C^{2N}) $ is a Banach space with the norm $$ \| u\| = \sup_{t \in \mathbf I} |u(t)| $$ where $|u(t)|$ is the Euclidean norm in $\C^{2N}$. With this norm we have the following lemma.
 
 \begin{lemma} \label{lemma 2.1} If $  H(t) $ in \eqref{ca} is positive semidefinite matrix with $\operatorname{tr} H(t) \equiv 1,$ then $ \| H(t)u(t)\| \leq \|u\| $ for all $ u \in C(\mathbf I, \C^{2N}) $ .\end{lemma}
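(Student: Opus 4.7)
The plan is to exploit the fact that a positive semidefinite matrix with unit trace must have operator norm at most one, and then pass from the pointwise bound to the supremum norm.

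First I would observe that since $H(t)$ is positive semidefinite (in particular Hermitian), it is unitarily diagonalizable with non-negative real eigenvalues $\lambda_1(t),\dots,\lambda_{2N}(t) \geq 0$. The constraint $\operatorname{tr} H(t) \equiv 1$ then reads $\sum_{k=1}^{2N} \lambda_k(t) = 1$, which forces each eigenvalue to satisfy $0 \leq \lambda_k(t) \leq 1$. Since the operator norm (spectral norm) of a Hermitian matrix equals its largest eigenvalue in absolute value, it follows that $\|H(t)\|_{\mathrm{op}} = \max_k \lambda_k(t) \leq 1$ for every $t \in \mathbf{I}$.

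Next I would apply the operator norm inequality pointwise in $t$: for each fixed $t \in \mathbf{I}$ and each $u \in C(\mathbf{I}, \C^{2N})$,
\[
|H(t)u(t)| \;\leq\; \|H(t)\|_{\mathrm{op}}\,|u(t)| \;\leq\; |u(t)|.
\]
Taking the supremum over $t \in \mathbf{I}$ on both sides then yields
\[
\|H(t)u(t)\| \;=\; \sup_{t\in \mathbf{I}} |H(t)u(t)| \;\leq\; \sup_{t\in \mathbf{I}} |u(t)| \;=\; \|u\|,
\]
which is the desired inequality.

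There is no real obstacle here; the only mild subtlety is making explicit why the operator norm of $H(t)$ is controlled by the trace, which is simply the combination of positive semidefiniteness (so eigenvalues are non-negative) with the trace-normed hypothesis (so they sum to one). One might alternatively invoke the Cauchy--Schwarz-type bound $\langle H(t)u,H(t)u\rangle \leq \|H(t)\|_{\mathrm{op}}\langle u, H(t)u\rangle \leq \langle u, u\rangle$ using $H(t) \leq I$, but the eigenvalue argument above is the most direct.
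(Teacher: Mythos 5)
Your proof is correct and follows essentially the same route as the paper: both arguments rest on the fact that a positive semidefinite matrix has operator norm bounded by its trace, combined with the trace-normed hypothesis and a supremum over $t$. You simply make explicit, via the eigenvalue decomposition, the step that the paper asserts without justification (and in doing so you avoid the paper's minor slip of writing $\|H(t)u(t)\| = \|H(t)\|\,\|u(t)\|$ with an equality where an inequality is meant).
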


 \begin{proof}  Note that if a matrix is positive semidefinite matrix the Euclidean norm of the matrix is dominated by the trace of the matrix. Therefore, for any $t\in I$ we have  
 \begin{align*}  \| H(t)u(t)\| = \| H(t)\| \|u(t)\| \\ \leq \operatorname{tr} H(t) \|u\|.\end{align*}  
 Since $\operatorname{tr} H(t) \equiv 1,$ we get $ \| H(t)u(t)\| \leq \|u\| .$

\end{proof}
 
 Now we define an operator $T$ on $C(\mathbf I, \ \C^{2N}) $ by  \begin{align} (Tu) (t) =   u(a)  -z \int_a^t J H(s)u(s) \ ds .\end{align}
Clearly $T$ is a bounded linear operator. We show that $T$ is a contraction mapping on  $C(\mathbf I, \C^{2N}) .$ First we observe that 

 \begin{align*} \| Tu -Tv \| & \leq  |z|  \int_a^t \|J H(s) (u(s)-v(s)) \| \ ds .\end{align*} Since $J$ is a unitary matrix \begin{align*}\| Tu -Tv \|  \leq  |z|  \int_a^t \| H(s) (u(s)-v(s)) \| \ ds .\end{align*}
 
 Since  $H$ is positive semidefinite with $\operatorname{tr} H(t) \equiv 1$, by lemma \ref{lemma 2.1} 
 
 \begin{align*}\| Tu -Tv \|  \leq  |z|  \int_a^t \| H(s) (u(s)-v(s)) \| \ ds \leq  \int_a^t \|(u-v)\| \ ds = (t-a) \|u-v\|.\end{align*}
 
  Here $\mathbf I =[a, b] \subset [-\frac{1}{4R}, \ \frac{1}{4R} ]$ , which imply that \begin{align} \| Tu -Tv \| \leq  \frac{1}{2} \|u-v \|  .\end{align}
 Hence $T$ is a contraction mapping on the Banach space $C(\mathbf I, \C^{2N}) .$ Therefore $T$ has a fixed point say $u$. That is $T u = u $  in  $C(\mathbf I, \C^{2N}) $. So \eqref{eq3} has a solution $ u $  in  $C(\mathbf I, \C^{2N}) $. The uniqueness follows from the fact that $f(t, u)= zJH(t) u(t)$ is Lipschitz in any compact subset of  $C(\mathbf I, \C^{2N}) .$ By continuation of solutions, for example see \cite{Jack}, we have the following theorem.
 
 \begin{theorem} \label{thm1} For any $z \in \C$,  $H(t)$ is a $2N\times2N$ traced normed positive semidefinite matrix, any bounded interval $\mathbf I \subset \R $ , theere exists unique vector valued solution to the canonical system \begin{equation*} 
 	Ju'(t)=z H(t)u(t), \quad z\in \mathbb{C},\, u(t_0) =u_0
 	\end{equation*}  for all $t\in \mathbf I$ and $ u_0 \in \R^{2N}$. \end{theorem}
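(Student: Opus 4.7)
The plan is to bootstrap the local contraction argument established in the text immediately preceding the statement into global existence and uniqueness on any bounded interval $\mathbf{I}$, by iterating the fixed-point step a finite number of times and then invoking a Grönwall estimate for uniqueness.

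First I would re-run the contraction argument of the excerpt with the operator centered at the prescribed initial time $t_0$ rather than at the left endpoint $a$, namely $(Tu)(t) = u_0 - z\int_{t_0}^{t} JH(s)u(s)\,ds$ acting on $C([t_0-\tfrac{1}{4R},t_0+\tfrac{1}{4R}],\C^{2N})$, where $R>|z|$ is fixed. The same estimates using Lemma \ref{lemma 2.1} and the unitarity of $J$ still produce contraction constant $\tfrac{1}{2}$, so the Banach fixed-point theorem yields a unique local solution with $u(t_0)=u_0$ on an interval of length $\tfrac{1}{2R}$ centered at $t_0$. The crucial observation is that this length depends only on $z$, and not on the initial time or the initial vector.

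To cover all of $\mathbf{I}$, I would partition it into finitely many closed subintervals $J_0,J_1,\ldots,J_M$ of length at most $\tfrac{1}{4R}$, arranged so that $t_0\in J_0$ and consecutive pieces share exactly an endpoint. Apply the local step on $J_0$, take the value of the resulting solution at the appropriate endpoint of $J_0$ as the new initial datum on $J_1$, apply the local step again, and continue in both directions away from $t_0$. Uniform boundedness of the local existence length ensures the iteration terminates after finitely many steps, while consecutive pieces agree at their shared endpoints by construction, producing an absolutely continuous solution on all of $\mathbf{I}$.

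For global uniqueness, if $u$ and $v$ are two solutions on $\mathbf{I}$ with $u(t_0)=v(t_0)$, then $w=u-v$ satisfies the homogeneous integral equation $w(t) = -z\int_{t_0}^t JH(s)w(s)\,ds$, and the pointwise bound $\|JH(s)w(s)\|\le\|w(s)\|$ from Lemma \ref{lemma 2.1} combined with the unitarity of $J$ feeds Grönwall's inequality to force $w\equiv 0$ on $\mathbf{I}$. The only mildly delicate point is checking that the excerpt's contraction argument, stated with $t_0$ at the left endpoint and the integral oriented one way, goes through unchanged for an interior $t_0$ with integrals oriented in either direction; this is routine, since swapping orientation only changes a sign inside an absolute-value bound. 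Beyond this I foresee no genuine obstacle, since the whole theorem is the standard Picard iteration plus continuation package for an equation whose right-hand side is Lipschitz in $u$ with constant controlled by $\operatorname{tr} H\equiv 1$.
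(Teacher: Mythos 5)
Your proposal is correct and follows essentially the same route as the paper: both establish local existence and uniqueness by setting up the integral-equation operator, showing it is a $\tfrac{1}{2}$-contraction on $C(\mathbf I,\C^{2N})$ via Lemma \ref{lemma 2.1} and unitarity of $J$, and then continuing the local solution to the whole bounded interval. The only difference is one of explicitness: the paper invokes the continuation principle by citing Hale, while you spell out the continuation by partitioning $\mathbf I$ into finitely many pieces of uniform length $O(1/R)$ (using the key observation that the local existence length depends only on $|z|$, not on $u_0$ or $t_0$) and prove global uniqueness by Gr\"onwall rather than by appealing to the Lipschitz property abstractly; both are standard, interchangeable ways to finish the Picard argument.
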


 \begin{theorem} For any $z \in \C, $ the set of solutions of \eqref{ca} is a vector space of dimensions $2N.$\end{theorem}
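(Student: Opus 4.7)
The plan is to exhibit an explicit linear isomorphism between the solution space $V$ of \eqref{ca} and $\mathbb{C}^{2N}$, using Theorem \ref{thm1} as the essential input.

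First I would check that $V$ is a vector subspace of the space of $\mathbb{C}^{2N}$-valued absolutely continuous functions. Linearity of the equation $Ju' = zH(t)u$ makes this routine: if $u$ and $v$ are solutions and $\alpha,\beta \in \mathbb{C}$, then $J(\alpha u + \beta v)' = \alpha J u' + \beta J v' = z H(t)(\alpha u + \beta v)$, and the zero function is trivially a solution.

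Next, fix any $t_0$ in the interval of consideration and define the evaluation map
\begin{equation*}
\Phi_{t_0} : V \longrightarrow \mathbb{C}^{2N}, \qquad \Phi_{t_0}(u) = u(t_0).
\end{equation*}
This map is manifestly linear. Theorem \ref{thm1} gives both surjectivity and injectivity: surjectivity because for every $u_0 \in \mathbb{C}^{2N}$ there exists a solution $u \in V$ with $u(t_0) = u_0$, and injectivity because any two solutions sharing the initial value $u_0$ must coincide on the whole interval. Therefore $\Phi_{t_0}$ is a linear isomorphism and $\dim V = \dim \mathbb{C}^{2N} = 2N$.

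No step here is really an obstacle; the entire content is packaged inside Theorem \ref{thm1}. The only minor point to be careful about is the domain of definition of the solutions: since Theorem \ref{thm1} produces a unique solution on any bounded interval $\mathbf{I}$, one should either fix such an interval at the outset or note that the solutions extend consistently to $[0,\infty)$ by uniqueness, so that the evaluation map is well defined on the full solution space independently of the choice of $t_0$.
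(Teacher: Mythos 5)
Your proof is correct and is essentially the same argument as the paper's: both rely on Theorem \ref{thm1} for existence and uniqueness of solutions with prescribed initial data at a fixed $t_0$, and your isomorphism $\Phi_{t_0}$ is exactly the inverse of the paper's assignment $e_i \mapsto u^i$. The paper simply phrases it as an explicit basis construction rather than as an evaluation-map isomorphism, but the content is identical.
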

 
 \begin{proof} Let $u^i(t), i=1, \dots , 2N $  be $2N$ solutions to\eqref{ca} such that $u^i(t_0) = e_i, $ where $e_i, \, i=1, \dots, 2N$ are the basis vectors in $\C^{2N}.$ Let $a_i,\, i=1, \dots 2N $ be scalars and  \begin{equation} \label{eq7}  u(t) = a_1 u^1(t) + \dots a_{2N} u^{2N} (t)\end{equation}  Then  \begin{equation} \label{eq8}  u(t_0) = a_1 e^1 + \dots a_{2N}e^{2N} \end{equation}  So, $ u(t_0) = 0 $ if and only if $a_i =0,\, i=1, \dots 2N .$  Therefore  $u^i(t), i=1, \dots , 2N $  are linearly independent. Furthermore, since any vector $u(t_0) $ can be written as  in \eqref{eq8}, by uniqueness, each solution to \eqref{ca} can be written \eqref{eq7}. Therefore, $ \{u^i(t), i=1, \dots , 2N  \} $ is a basis of the solution space. \end{proof}

 \subsection {Matrix Solution}
 
 An $2N\times 2N$ matrix-valued function $W(t,z)$ is  called a fundamental matrix solution of \eqref{ca} if \begin{equation} \label{eq2.1}JW'(t,z) = zH(t)W(t,z) \end{equation} and for any fixed $z \in \C, $  the $2N$ columns of $W(t,z)$ are linearly independent solutions of \eqref{ca}. 
 
 \begin{theorem} Let $W(t,z)$ be a fundamental solution of \eqref{ca}. Then the determinant of $W(t,z)$ is independent of $t$. That is \begin{equation} \displaystyle \det (W(t,z)) = \det(W(t_0,z) ). \end{equation}  \end{theorem}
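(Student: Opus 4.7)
The plan is to apply Liouville's (Jacobi's) formula for the derivative of a determinant along a matrix ODE, and then reduce everything to verifying the single algebraic identity $\operatorname{tr}(JH(t)) = 0$. Since $J^2 = -I$, we have $J^{-1} = -J$, so equation \eqref{eq2.1} can be rewritten as
\begin{equation*}
W'(t,z) = -z\,J H(t)\, W(t,z).
\end{equation*}
Liouville's formula then gives
\begin{equation*}
\frac{d}{dt} \det W(t,z) \;=\; \operatorname{tr}\!\bigl(-z J H(t)\bigr)\,\det W(t,z) \;=\; -z\,\operatorname{tr}(JH(t))\,\det W(t,z).
\end{equation*}
So the entire statement reduces to proving $\operatorname{tr}(JH(t)) = 0$ for all $t$.

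For this identity I would use that $H$ is symmetric (as a real positive semidefinite matrix) while $J$ is antisymmetric, i.e.\ $J^{t} = -J$. Taking transposes inside the trace,
\begin{equation*}
\operatorname{tr}(JH) \;=\; \operatorname{tr}\!\bigl((JH)^{t}\bigr) \;=\; \operatorname{tr}(H^{t} J^{t}) \;=\; -\operatorname{tr}(HJ) \;=\; -\operatorname{tr}(JH),
\end{equation*}
so $\operatorname{tr}(JH(t)) = 0$. Alternatively, a direct block calculation works: writing $H = \begin{pmatrix} A & B \\ B^{t} & C \end{pmatrix}$ with $A$, $C$ symmetric, one computes $JH = \begin{pmatrix} -B^{t} & -C \\ A & B \end{pmatrix}$, whose diagonal blocks give $\operatorname{tr}(JH) = -\operatorname{tr}(B^{t}) + \operatorname{tr}(B) = 0$.

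Combining the two steps, $\frac{d}{dt}\det W(t,z) \equiv 0$, so $\det W(t,z)$ is constant in $t$ and in particular equals $\det W(t_0,z)$. The only real obstacle is the algebraic identity $\operatorname{tr}(JH) = 0$; once that is in hand, Liouville's formula does all the work. A minor subtlety worth flagging is that the argument tacitly uses the symmetry (not merely Hermiticity) of $H$, which is the standard convention for Hamiltonians in canonical systems and is consistent with the rest of the paper.
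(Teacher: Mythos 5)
Your proof is correct, and it takes a genuinely different (and in fact more robust) route than the paper. The paper integrates $W^{-1}W' = -zJH(t)$ to write $W(t,z) = W(t_0,z)\,e^{-z\int_{t_0}^t JH(s)\,ds}$ and then applies the Jacobi identity $\det e^A = e^{\operatorname{tr} A}$ to this exponential. That representation of $W$ as a single matrix exponential is only valid when the matrices $JH(s)$ for different $s$ commute with one another (and with their integral), which is not true for a general time-dependent $H$; so the paper's intermediate step is not justified in general. You instead apply Liouville's (Abel--Jacobi) formula for the derivative of the determinant, $\tfrac{d}{dt}\det W = \operatorname{tr}(-zJH)\det W$, directly to the ODE $W' = -zJH\,W$. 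This requires no commutativity assumptions, so it is the correct way to make the argument rigorous. Both proofs then reduce to the same algebraic fact $\operatorname{tr}(JH) = 0$: the paper simply asserts it from the symmetry of $H$, while you supply the transpose-and-cyclicity argument (and an equivalent block computation), which is a nice addition. Your caveat about $H$ being symmetric rather than merely Hermitian is also well placed, since the paper does implicitly assume a real symmetric $H$ here.
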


 \begin{proof} In order to prove this theorem we will use the Jacobi formula \begin{equation}\label{jf} \det e^A = e^{ \operatorname{tr} A }.\end{equation}  	 
 Since $W(t,z)$ is a solution of  \eqref{ca}   we have \begin{equation} \displaystyle W(t,z)^{-1} W'(t,z) = -zJH(t) .\end{equation} It follows that  \begin{equation} \displaystyle \frac{d}{dt} \ln (W(t,z)) = -zJH(t) .\end{equation}
 and on integration we get, 
\begin{equation} \displaystyle  \ln (W(t,z)) =  \ln (W(t_0,z))-z\int_{t_0}^tJH(s)ds .\end{equation}
It follows that \begin{equation*} \displaystyle   W(t,z) =  W(t_0,z)
e^{-z\int_{t_0}^tJH(s)ds }.\end{equation*}
Then by \eqref{jf} we get
 \begin{equation*} \displaystyle \det (W(t,z)) = \det(W(t_0,z) ) e^{-z \int_{t_0}^t\operatorname{tr} (JH(s) )ds}\end{equation*}
Since $H(t)$ is symmetric, $ \operatorname{tr} ( J H(t) ) \equiv 0. $ Hence \begin{equation*} \displaystyle \det (W(t,z)) = \det(W(t_0,z) ). \end{equation*}

\end{proof}

\begin{theorem} \label{fs}  The fundamental solution  $W(t,z)$ of \eqref{ca}, with the initial values $W (t_0, z) = J$  is symplectic for all $t, t_0 \in \mathcal I$ and $z\in \C.$ Conversely, if $ W (t)$ is a continuously differentiable symplectic real matrix-valued function such that $W(t)$ and  $W'(t)$ commute, and $JW'$ is positive semidefinite then $  \mathcal W (t, z )= e^{z W(t)} $ is a matrix-valued solution of a canonical system. \end{theorem}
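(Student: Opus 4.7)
The plan is to handle the two implications separately.

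For the forward direction, I would introduce the auxiliary matrix-valued function $\Phi(t) := W(t,z)^{\top} J W(t,z)$ and verify that $\Phi'(t) \equiv 0$. Differentiating the product and substituting $W' = -zJHW$ (read off from $JW' = zHW$ via $J^{-1} = -J$) produces two terms; using the symmetry $H^{\top} = H$, the antisymmetry $J^{\top} = -J$, and the identity $J^2 = -I$, the two terms come out as $-z W^{\top} H W$ and $+z W^{\top} H W$ and cancel. Evaluating at $t_0$ gives $\Phi(t_0) = J^{\top} J J = -J^{3} = J$, so $\Phi(t) \equiv J$ for all $t$, i.e., $W(t,z)$ is symplectic.

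For the converse, the central analytic fact I would invoke is the matrix exponential derivative identity $\frac{d}{dt} e^{A(t)} = A'(t)\, e^{A(t)}$, which holds \emph{precisely} when $A(t)$ and $A'(t)$ commute. Applying this with $A(t) = zW(t)$, under the hypothesis $W(t)W'(t) = W'(t)W(t)$, yields $\mathcal W'(t,z) = zW'(t)\, e^{zW(t)} = zW'(t)\, \mathcal W(t,z)$. Left-multiplying by $J$ gives $J \mathcal W'(t,z) = z\,[JW'(t)]\, \mathcal W(t,z)$, which is \eqref{ca} with coefficient matrix $H(t) := JW'(t)$. By hypothesis, this $H(t)$ is positive semidefinite, so the canonical structure is in place; the matrix exponential is invertible, so $\mathcal W(t,z)$ is in fact a fundamental matrix solution.

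The main obstacle I foresee is guaranteeing that $H(t) = JW'(t)$ is \emph{symmetric} in addition to positive semidefinite, so that it genuinely qualifies as a canonical coefficient. If the paper's convention takes positive semidefiniteness to include symmetry, the hypothesis already handles this; otherwise I would differentiate the symplectic identity $W^{\top} J W = J$ to obtain $(W')^{\top} J W + W^{\top} J W' = 0$, conclude the symmetry of $W^{\top} J W'$, and then transfer this to $JW'$ using invertibility of $W$. The commutativity assumption on $W$ and $W'$ is really what powers the converse: without it the derivative of $e^{zW(t)}$ is given by a Duhamel integral $z \int_0^1 e^{szW(t)} W'(t)\, e^{(1-s)zW(t)}\, ds$ that does not reduce to $zW'(t)\, e^{zW(t)}$, and the identification of the coefficient matrix breaks down.
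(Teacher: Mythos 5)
Your proof is correct and follows the same route as the paper: in the forward direction the paper also differentiates $W(t,z)^{t}JW(t,z)$, cancels the two terms using $H^{t}=H$, $J^{t}=-J$, $J^{2}=-I$, and fixes the constant via $J^{t}JJ=J$; in the converse direction the paper simply states $H(t)=JW'(t)$ without spelling out the derivative of the exponential, so your Duhamel/commutativity discussion is a genuinely more careful version of the same one-line argument.

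One caution on the fallback you sketch for recovering symmetry of $JW'$. Differentiating $W^{\top}JW=J$ does give that $W^{\top}JW'$ is symmetric (equivalently $JW^{-1}W'$ is symmetric), but this does \emph{not} transfer to $JW'$ via invertibility of $W$, even with $[W,W']=0$: take $N=1$ and $W(t)=\operatorname{diag}(e^{t},e^{-t})$, which is symplectic, commutes with $W'$, yet $JW'=\begin{pmatrix}0&e^{-t}\\ e^{t}&0\end{pmatrix}$ is not symmetric for $t\neq 0$. So the symmetry of $JW'$ really must come from the hypothesis ``$JW'$ is positive semidefinite'' (which, in the paper's usage for $H$, does carry symmetry with it), not from the symplectic identity. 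Your primary argument is fine; only the proposed fallback would fail.
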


\begin{proof} Let $W(t,z)$ be the fundamental solution of \eqref{ca} with the initial values  $W (t_0, z) = J$. Then

\begin{align*} \displaystyle \frac{d}{dt} (W(t,z)^t J W(t,z)) &=  (W(t,z)^t)' J W(t,z) + W(t,z)^t J W'(t,z) \\ & = (W'(t,z))^t JW(t,z) + W(t,z)^t zH(t) W(t,z)\\ & = (-zJH(t)W(t,z))^t JW(t,z) + z W(t,z)^t H(t) W(t,z) \\ & = -zW(t,z)H(t)J^t JW(t,z) + z W(t,z)^t H(t) W(t,z) \\ & = -zW(t,z)H(t)W(t,z) + z W(t,z)^t H(t) W(t,z)\\ & = 0 .\end{align*}
It follows that $(W(t,z)^t J W(t,z)) =  C $, a constant. Using the initial value $W (t_0, z) = J$ we bet  $C= J$ which shows $(W(t,z)^t J W(t,z)) =  J $ which is a condition for $W(t,z)$ to be symplectic matrix. Conversely, suppose  $W(t)$ is a continuously differentiable symplectic real matrix valued function. Then $\mathcal W(t)$ is a matrix solution to a canonical system $ Ju'(t) = z H(t) u(t)$ where $H(t) = JW'(t).$ 

 \end{proof}

It follows from theorem \ref{fs} that if the fundamental matrix solution $W(t,z)$ of \eqref{ca} has initial value  $W (t_0, z) = J$, then $ \displaystyle \det (W(t,z)) = 1  .$

\subsection{Periodic Canonical System and Floquet theory}

In this section we discuss $2N$ dimensional periodic canonical system:
\begin{equation} \label{pca}
Ju'(t)=z H(t)u(t), \quad z\in \mathbb{C}
\end{equation}
where $H(t)$ is a $2N\times 2N$ positive semidefinite matrix and satisfy  $H(t+p)= H(t)$ for some $p\in \R $ and all $t\in \R.$\\

\begin{lemma}If $ W (t, z)$ is a fundamental matrix solution of \eqref{pca} with initial value $W(t_0, z) = J $  then \[W(t+p, z) = -  W(t, z) J W(p,z).\] \end{lemma}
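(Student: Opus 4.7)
The plan is a standard Floquet-type argument: show that both sides of the claimed identity are matrix solutions of the same linear system and match at a single point, then invoke uniqueness. Throughout I will implicitly take $t_0 = 0$ so that $W(0,z) = J$, since otherwise the formula cannot hold as stated (evaluating at $t=0$ on the right gives $-J \cdot J \cdot W(p,z) = W(p,z)$ because $J^2 = -I$, which must equal $W(p,z)$ on the left).

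First I would set $V(t,z) := W(t+p, z)$ and use the $p$-periodicity of $H$ to verify that $V$ solves the canonical system: differentiating gives $JV'(t,z) = JW'(t+p,z) = zH(t+p)W(t+p,z) = zH(t)V(t,z)$. Next I would set $M(t,z) := -W(t,z) J W(p,z)$ and check directly that $M$ is also a matrix solution, noting that $W(p,z)$ is a constant (in $t$) matrix: $JM'(t,z) = -JW'(t,z) J W(p,z) = -zH(t)W(t,z)JW(p,z) = zH(t)M(t,z)$.

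Then I would compute both sides at $t=0$. On the one hand $V(0,z) = W(p,z)$. On the other, using $W(0,z) = J$ and $J^2 = -I$,
\begin{equation*}
M(0,z) = -W(0,z) J W(p,z) = -J^2 W(p,z) = W(p,z).
\end{equation*}
So $V$ and $M$ are matrix solutions of the same linear system with the same value at $t=0$. Applying Theorem \ref{thm1} column-by-column (uniqueness of the initial value problem extends to matrix-valued solutions), we conclude $V(t,z) \equiv M(t,z)$, which is the desired identity.

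There is no real obstacle here beyond being careful about two bookkeeping points: (i) the identity $J^{-1} = -J$, which is what makes the sign in the formula come out right, and (ii) the implicit choice $t_0 = 0$, without which the statement as written cannot be correct. The argument is essentially the observation that a shift by the period produces another fundamental matrix, so it must differ from the original by right-multiplication by a constant matrix, and that matrix is pinned down by the value at $t=0$.
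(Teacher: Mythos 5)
Your proposal is correct and follows essentially the same route as the paper's own proof: define the two matrix functions $W(t+p,z)$ and $-W(t,z)JW(p,z)$, check both solve the system and agree at $t=0$ (via $J^2=-I$), and conclude by uniqueness. Your version is slightly more careful than the paper's, in that you actually carry out the verification that each side is a solution and note explicitly that the statement requires $t_0=0$.
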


\begin{proof} Let $U(t)=W(t+p)$ and $V(t)= -W(t)JW(p).$ Then $U(t)$ and $V(t)$ are matrix solutions of \eqref{pca}  with the same initial values $U(0)= V(0) = W(p) .$ Then by theorem \ref{thm1} we have $U(t) \equiv V(t).$ Thus \[W(t+p, z) = - W(t, z) J W(p,z)\] \end{proof}

The matrix $ C= -JW(p)$ is called the \textit{monodromy matrix} of \eqref{pca}, and the eigenvalues $ \lambda$ of $C$ are called the characteristic multiplier of \eqref{pca}.

 The monodromy matrix $C$ is symplectic being the product of two symplectic matrice and if $\lambda$ is an eigen value of $C$ then $\frac{1}{ \lambda}$ is also an eigen value of $C.$ Therefore we have, 
 \[ \det{C}= \lambda_1, \lambda_2 \dots \lambda_{2N} = 1 .\]

Note that if a matrix is non singular it has an exponential form. Indeed, we can write $C$ in its Jordan normal form, $C=P J P^{-1}$, and build a new matrix $\tilde{J}$ with the same Jordan blocks structure as $J$, having $\mu_k$ instead of $\lambda_k$ in the block diagonals where $\lambda_k =e^{\mu_k}$, which is always possible since $\lambda_k \neq 0$. The matrix exp$\tilde{J}$ maintains the same Jordan blocks direct product structure, except its blocks are upper triangular, hence each of them similar to the corresponding Jordan block of $J$. It is obvious that $C$ can be written as an exponential $C=e^{P\tilde{C} P^{-1}} = e^{K}$ so that  $W(p) =  J e^{K}$ for some complex matrix $K.$ Any $\mu$ such that $\lambda = e^{\mu }$ is called the \textit{Floquet exponent.}

\begin{theorem}[The Floquet-Lyapunov theorem] The fundamental matrix solution $W(t)$ of \eqref{pca} that satisfies $W(0) =J$ is of the form \begin{equation} \label{nf} \displaystyle W(t) = J U(t)e^{\frac{1}{p}Kt}\end{equation} where $U(t)$ is symplectic and $p-$ periodic matrix-valued function and $K \in \C^{ 2N \times 2N } .$  \end{theorem}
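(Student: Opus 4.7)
The plan is to take $U(t):=-JW(t)\,e^{-tK/p}$ for a suitable complex logarithm $K$ of the monodromy matrix $C=-JW(p)$; this is equivalent, after multiplying on the left by $J$ and using $J^{-1}=-J$, to the asserted decomposition $W(t)=JU(t)\,e^{tK/p}$. The construction of $C=e^{K}$ has already been carried out in the paragraph preceding the theorem via a Jordan-form argument, so the real content of the theorem is that $K$ can be chosen in such a way that the resulting $U$ is both $p$-periodic and symplectic.

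First I would verify $p$-periodicity. The earlier lemma gives $W(t+p)=-W(t)JW(p)$, and $C=e^{K}$ together with $J^{2}=-I$ yields $W(p)=Je^{K}$, hence $W(t+p)=W(t)\,e^{K}$. Substituting into $U(t+p)=-JW(t+p)\,e^{-(t+p)K/p}$ and exploiting that $e^{K}$ commutes with $e^{-(t+p)K/p}$, the factor $e^{K}$ cancels and one recovers $U(t+p)=U(t)$. This step does not depend on the specific choice of logarithm.

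Next I would verify symplecticity by computing $U^{T}JU$. Using $J^{T}=-J$ one obtains $U^{T}=e^{-tK^{T}/p}W^{T}J$, and since $W(t)$ is symplectic by Theorem~\ref{fs} (the initial datum is $W(0)=J$), we have $W^{T}JW=J$, so a short calculation gives $U^{T}JU=e^{-tK^{T}/p}\,J\,e^{-tK/p}$. This equals $J$ for all $t$ precisely when $JK+K^{T}J=0$, i.e.\ when $K$ lies in the symplectic Lie algebra $\mathfrak{sp}(2N,\C)$.

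The main obstacle is therefore to confirm that a logarithm $K$ of $C$ can in fact be selected inside $\mathfrak{sp}(2N,\C)$. This reduces to the surjectivity of $\exp:\mathfrak{sp}(2N,\C)\to\mathrm{Sp}(2N,\C)$, which one proves block by block on the symplectic Jordan normal form of $C$: since $C$ is symplectic (being the product of the symplectic matrices $-J$ and $W(p)$), its eigenvalues come in reciprocal pairs $\{\lambda,\lambda^{-1}\}$ with none of them zero, and each such symplectic Jordan block admits a complex logarithm respecting the symplectic pairing. Choosing $K$ in this manner supplies the decomposition required by the theorem.
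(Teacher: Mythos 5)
Your proposal uses the same ansatz $U(t) = -JW(t)\,e^{-tK/p}$ as the paper, and the periodicity verification (via $W(t+p) = -W(t)JW(p)$ and $W(p)=Je^{K}$, hence $W(t+p)=W(t)e^{K}$) runs along identical lines, so the core of the argument coincides. The real difference is in how you treat the symplecticity of $U$. The paper dismisses it as ``the product of symplectic matrices,'' which tacitly assumes $e^{-tK/p}$ is symplectic; but the preceding Jordan-form construction only produces some complex $K$ with $C=e^{K}$ and gives no reason to expect $K$ to lie in the symplectic Lie algebra. You fill this gap correctly: computing $U^{T}JU = e^{-tK^{T}/p}Je^{-tK/p}$ from the symplecticity of $W$, you isolate the exact condition $JK+K^{T}J=0$, and you observe that the missing ingredient is the surjectivity of $\exp\colon\mathfrak{sp}(2N,\C)\to\mathrm{Sp}(2N,\C)$, which one establishes on the symplectic Jordan normal form of the monodromy matrix $C$. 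That last step is only sketched and would need to be spelled out block by block for a fully self-contained proof, but the outline is sound and your argument is more rigorous than the paper's on precisely the point where the paper's proof is weakest.
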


\begin{proof} Write $ \displaystyle U(t) = -J W(t) e^{-\frac{1}{p}Kt}.$ Then $U(t)$ is symplectic because of the product of symplectic matrices. We show $U(t)$ is $p-$ periodic,  

\begin{align*} U(t+p)  & = -J W(t+p) e^{- \frac{1}{p} K(t+p)} \\ & = J W(t)J W(p)  e^{-K} e^{- \frac{1}{p}Kt} \\ & = J W(t)J J e^K e^{-K} e^{- \frac{1}{p}Kt}\\&  = -J W(t)  e^{- \frac{1}{p}Kt} \\ &= U(t) \end{align*}

It follows that $ W(t) = J U(t) e^{\frac{1}{p}Kt}$
\end{proof}

\begin{proposition} \label{ps}Let $\lambda $ be a characteristic multiplier and $ \mu$ be the corresponding characteristic exponent so that $ \lambda = e^{\mu}$, then there exists a solution $u(t,z)$ of \eqref{pca} such that 
\begin{tabbing} (a) $u(t+p) = \lambda u(t,z)$ \\

(b)  $u(t, z) = e^{\frac{\mu}{p} t}  v(t,z), $ for some periodic function $ v(t, z)$ with period $p$. \end{tabbing} \end{proposition}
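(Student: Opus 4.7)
The natural strategy is to build $u(t,z)$ by applying the fundamental matrix $W(t,z)$ to a suitably chosen eigenvector of the monodromy matrix $C=-JW(p,z)$. Specifically, I would pick $w\in\C^{2N}$ with $Cw=\lambda w$ (such a $w$ exists since $\lambda$ is by definition an eigenvalue of a complex matrix) and set $u(t,z):=W(t,z)w$. Because the columns of $W(t,z)$ are solutions of \eqref{pca} and the solution set is a vector space, this $u(t,z)$ is automatically a solution.

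For part (a), the plan is to invoke the preceding lemma, which gives $W(t+p,z)=-W(t,z)JW(p,z)$. Applying this to $w$ and using the eigenvector relation $-JW(p,z)w=\lambda w$, the scalar $\lambda$ factors out to the left of $W(t,z)$, yielding $u(t+p,z)=W(t,z)\bigl(-JW(p,z)w\bigr)=\lambda W(t,z)w=\lambda\,u(t,z)$.

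For part (b), the clean move is simply to divide off the expected exponential: define $v(t,z):=e^{-\mu t/p}\,u(t,z)$, so that the representation $u(t,z)=e^{\mu t/p}v(t,z)$ holds by construction. Part (a) together with the identity $e^{\mu}=\lambda$ then gives
\[
v(t+p,z)=e^{-\mu(t+p)/p}u(t+p,z)=e^{-\mu t/p}e^{-\mu}\lambda\,u(t,z)=e^{-\mu t/p}u(t,z)=v(t,z),
\]
so $v(\cdot,z)$ is $p$-periodic, completing (b).

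The main subtlety—rather than a genuine obstacle—is the sign bookkeeping forced by $J^{2}=-I$, which is exactly why the monodromy matrix carries the extra minus sign in the definition $C=-JW(p,z)$. Once that convention is respected, the identity $-JW(p,z)w=\lambda w$ slides directly into the conclusion of the preceding lemma and the rest of the argument is essentially computation-free. A minor additional remark I would add is that if one wished to use the Floquet--Lyapunov representation $W(t)=JU(t)e^{Kt/p}$ instead, the same $u(t,z)$ would arise by choosing $w$ to be an eigenvector of $K$ with eigenvalue $\mu$, but the direct route via $C$ is shorter.
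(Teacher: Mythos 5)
Your proposal is correct and follows essentially the same route as the paper: choose an eigenvector of the monodromy matrix $C=-JW(p,z)$, set $u(t,z)=W(t,z)w$, use the preceding lemma $W(t+p,z)=-W(t,z)JW(p,z)$ to obtain (a), and define $v(t,z)=e^{-\mu t/p}u(t,z)$ to obtain (b). The paper's proof is the same argument with the eigenvector denoted $c$.
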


\begin{proof} Let $\lambda$ is an eigen value of $C$ and $c$ is an eigenvector corresponding to eigenvalue $\lambda.$ Let $u(t,z) = W(t,z)c$ where  $ W(t,z)$ is the fundamental matrix solution of \eqref{pca}. Then \[J u'(t,z) = z H(t)u(t,z).\] 
(a) We show that $ u(t+p,z) = \lambda u(t,z) .$ 	
\begin{align*} u(t+p, z)  &= W(t+p, z) c \\
 &= W(t,z ) C c \\ &= \lambda W(t,z)c \\ &= \lambda u(t,z)\end{align*}

(b) Let $v(t,z) = u(t,z) e^{-\frac{\mu}{p} t} $. We show that $v(t,z)$ is $p \ -$ periodic.

\begin{align*} v(t+p, z) = u(t+p,z) e^{-\frac{\mu}{p} (t+p)}
= \lambda u(t,z) e^{-\frac{\mu}{p}t } e^{-\mu}  =  u(t,z) e^{-\frac{\mu}{p}t } = v(t,z)\end{align*}

\end{proof}

 Using the form \eqref{nf} of the fundamental matrix of  \eqref{pca} we have, 
 
  \begin{equation} \label{nf1} \displaystyle -U'(t)e^{\frac{1}{p} Kt} - U(t) \frac{1}{p} K e^{\frac{1}{p}Kt} = z H(t) JU(t) e^{\frac{1}{p} Kt} \end{equation}
  Multiplying from left on both sides by $e^{ -\frac{1}{p}Kt} $ we get,
  
 \begin{equation} \label{nf2} \displaystyle -U'(t) - U(t) \frac{1}{p} K = z H(t) JU(t)  \end{equation}
 
 \begin{corollary} The symplectic periodic change of variable $v(t,z) = JU(t,z)u(t,z)$ transforms the periodic canonical system \eqref{pca} to a constant linear system. \end{corollary}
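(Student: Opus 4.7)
The plan is a direct substitution. The change of variable between $u$ and $v$ determined by the symplectic periodic factor $JU(t,z)$ is precisely the one suggested by the Floquet--Lyapunov decomposition $W(t)=JU(t)e^{Kt/p}$: any solution $u(t)=W(t)c$ of \eqref{pca} decomposes as $u = JU(t)\bigl(e^{Kt/p}c\bigr)$, so setting $v(t) = e^{Kt/p}c$ we have $u = JUv$ (equivalently $v=(JU)^{-1}u$), and $v$ should satisfy the constant system $v'=(K/p)v$. I would verify this by direct substitution into \eqref{pca} without invoking the explicit form of $W$.

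First, I would differentiate $u = JU(t)v(t)$, apply $J$ from the left, and use $J^2=-I$ to obtain
\[
Ju' = -U'(t)v(t) - U(t)v'(t).
\]
Substituting into $Ju'=zH(t)u=zH(t)JU(t)v$ yields
\[
-U'(t)v(t) - U(t)v'(t) = zH(t)JU(t)\,v(t).
\]

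Second, the identity \eqref{nf2} is exactly the ingredient needed to collapse the right-hand side: rearranged it reads $zH(t)JU(t) = -U'(t) - U(t)K/p$. Plugging this in, the $-U'(t)v(t)$ terms cancel; multiplying what remains by $-U(t)^{-1}$ leaves
\[
v'(t,z) = \frac{1}{p}K\,v(t,z),
\]
a linear system with constant coefficient matrix $K/p$, as desired.

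The only real obstacle here is bookkeeping: one must be careful with the conventions $J^{-1}=-J$ and $J^2=-I$, and with the direction of the change of variable (the symplectic periodic factor $JU(t,z)$ conjugates between \eqref{pca} and its constant-coefficient Floquet form, so solutions of \eqref{pca} are expressed \emph{as} $JUv$ rather than obtained by applying $JU$ directly to $u$). Once the substitution is set up, the reduction to the constant system $v'=(K/p)v$ is an immediate consequence of identity \eqref{nf2}, and the periodicity and symplecticity of $U$ stated in the Floquet--Lyapunov theorem ensure the transformation is invertible and of the claimed type.
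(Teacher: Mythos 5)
Your proof is correct and follows essentially the same route as the paper's: differentiate the product, apply $J$ with $J^2=-I$, and use identity \eqref{nf2} (you plug it into the right-hand side, while the paper subtracts \eqref{nf2}$\cdot u$ from the substituted equation, but these are the same algebra) to arrive at $v'=\frac{1}{p}Kv$. Your remark about the direction of the change of variable — that the solution of \eqref{pca} is expressed as $JU$ times the new variable, i.e.\ $u=JUv$ rather than $v=JUu$ in your labeling — is a fair reading; the paper's proof does the same computation with the roles of $u$ and $v$ interchanged relative to yours.
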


 \begin{proof}
 
  The symplectic periodic change of variable $v(t,z) = JU(t,z)u(t,z)$ in the periodic canonical system \eqref{pca} yield,
  
\begin{equation} \label{nf3} \displaystyle - U'(t)u(t) - U(t) u'(t) =  z H(t) JU(t)u(t)  \end{equation}

  Applying a vector $u(t)$ from the right on \eqref{nf2} we get
   
\begin{equation} \label{nf4} \displaystyle - U'(t)u(t) - U(t) \frac{1}{p} K  u(t) =  z H(t) JU(t)u(t)  \end{equation}

Subtracting \eqref{nf3} from \eqref{nf4} we get,

\begin{equation} \label{eq5} u'(t) = \frac{1}{p} K  u(t) \end{equation}

\end{proof}

 Note  that the fundamental matrix solution of \eqref{eq5} is  $ W(t)  = e^{\frac{1}{p} Kt} .$

\begin{theorem} 
\label{theo.2.10}
For any $ z \in \C,$ there exists at least a   solutions $u(t,z)$ of \eqref{pca} that is either pseudo-periodic or  satisfy $u(t, z) \rightarrow 0 $ as $t \rightarrow \infty .$  \end{theorem}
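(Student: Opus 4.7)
The plan is to combine Proposition \ref{ps} with the symplectic structure of the monodromy matrix $C = -JW(p)$. Every solution obtained from an eigenvector of $C$ factors through its Floquet exponent as $u(t,z) = e^{(\mu/p)t} v(t,z)$ with $v$ periodic, so the qualitative behavior of the solution is controlled entirely by $|\lambda| = e^{\mathrm{Re}\,\mu}$. Thus the theorem reduces to finding at least one characteristic multiplier with $|\lambda| \leq 1$.

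First I would recall that $C$ is symplectic, so its characteristic polynomial is reciprocal: the eigenvalues come in pairs $\{\lambda, 1/\lambda\}$ (and complex-conjugate pairs if one insists on real entries, but here the pairing already suffices). In particular, among the $2N$ eigenvalues (counted with multiplicity) it is impossible for all of them to satisfy $|\lambda| > 1$, since then all reciprocals would satisfy $|1/\lambda| < 1$ and could not also lie in the spectrum. Hence at least one eigenvalue $\lambda_0$ of $C$ satisfies $|\lambda_0| \leq 1$. Let $\mu_0$ be a corresponding Floquet exponent with $e^{\mu_0} = \lambda_0$, so $\mathrm{Re}\,\mu_0 \leq 0$.

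Next I would apply Proposition \ref{ps} to this $\lambda_0$, producing a non-trivial solution
\[
u(t,z) = e^{(\mu_0/p) t} v(t,z),
\]
with $v(\cdot,z)$ continuous and $p$-periodic, hence bounded on $\R$. I would then split into two cases. If $|\lambda_0| = 1$, then $\mathrm{Re}\,\mu_0 = 0$ and $e^{(\mu_0/p)t}$ has modulus one; the solution is a product of a periodic function and a pure oscillation, i.e.\ pseudo-periodic (this is precisely the Bloch-type solution described by part (a) of Proposition \ref{ps}, since $u(t+p,z) = \lambda_0 u(t,z)$ with $|\lambda_0|=1$). If $|\lambda_0| < 1$, then $\mathrm{Re}\,\mu_0 < 0$, so $|e^{(\mu_0/p)t}| \to 0$ as $t \to \infty$, and because $v$ is periodic and hence uniformly bounded, $\|u(t,z)\| \to 0$ as $t \to \infty$.

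The only subtle point is justifying why the symplectic reciprocal-pair structure forces $|\lambda_0| \leq 1$ for some eigenvalue; this is the main content of the argument but is standard once one recalls that the characteristic polynomial of a symplectic matrix $C$ satisfies $p(\lambda) = \lambda^{2N} p(1/\lambda)$, so $\lambda$ and $1/\lambda$ occur with equal multiplicity. Everything else is a direct application of Proposition \ref{ps} together with periodicity of $v$.
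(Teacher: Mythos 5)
Your proposal is correct and follows essentially the same route as the paper: use the symplectic reciprocal pairing $\lambda \leftrightarrow 1/\lambda$ of the eigenvalues of the monodromy matrix to produce an eigenvalue with $|\lambda|\le 1$, then apply Proposition \ref{ps} and the boundedness of the periodic factor $v$ to conclude decay when $|\lambda|<1$ and pseudo-periodicity when $|\lambda|=1$. If anything, your treatment is slightly cleaner, since you justify the existence of an eigenvalue with $|\lambda|\le 1$ via the reciprocal characteristic polynomial and avoid the paper's erroneous aside that $|\lambda|=1$ forces $\lambda=\pm 1$ or $\pm i$.
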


\begin{proof}  If $\lambda $ is an eigenvalue of $C$, $ \frac{1}{\lambda}$ is also an eigenvalue of $C$. Therefore, there exists an eigenvalue $\lambda $ with the possibilities: $  | \lambda | <1  \ ,  \ | \lambda | = 1 .$ 
	
 If $| \lambda | <1 , $ then the Floquet exponent is given by $ \mu = \ln (|\lambda|) + i \arg { (\lambda ) }.$ By proposition \ref{ps}  there exists a solution of \eqref{pca} of the form   $u(t, z) = e^{\frac{\mu}{p} t}  v(t,z), $ for some periodic function $ v(t, z)$ with period $p$. Since the real part of $\mu$, $ \Re (\mu) < 0 ,$ the solution satisfy $ u(t,z) \rightarrow 0 $ as $t \rightarrow \infty .$ 

If $|\lambda| =1, $ then $ \lambda = \pm 1 $ or  $ \lambda = \pm i ,$ then $\mu$ is purely imaginary say $\lambda = e^{i \theta }$. Therefore  the solution $u(t, z)  = e^{ i \theta t}  v(t,z)$, where $v(t,z)$ is a $p-$ periodic function. So  $u(t, z)$ is pseudo-periodic solution.

\end{proof}

\section{Physical applications}

The problem of solving linear differential equations with periodic coefficients, which pretty much overlaps with the Floquet theory, is a century-old subject 
with vast applications in several areas of science and technology, ranging from quantum  to classical physics, chemistry, control theory, dynamical systems and many more \cite{PV2}-\cite{3}. It represents, for example, a powerful tool to study nonlinear perturbations, noise, and stability of systems depending instantaneously on time, and admitting periodic steady states. A very well-known application is  the study of stability of the inverted pendulum and the Hill equation. In this section we present only four selected examples of physical models that use  equations like \eqref{pca}: linear stability of periodic Hamiltonian systems, position-dependent effective mass systems, pseudo-periodic water waves, and higher dimensional Schr\"{o}dinger equation. In all these examples \eqref{pca} describes a linear Hamiltonian system with $J$, the standard symplectic matrix in $2N$ dimensions, belonging to Sp$(N,\mathbf{R})$ symplectic group and endowing the phase space with symplectic structure. 

\subsection{Linear stability of periodic Hamiltonian}
If the components of the vector-valued function $u(t) \in \mathbf{C}^{2N}$ describe the (generalized: positions and momenta) degrees of freedom  of a classical finite-dimensional Hamiltonian system, the corresponding Hamiltonian equation for the evolution of the state vector has the form
\begin{equation}
\label{eq.phys.1}
J u'(t)= \nabla_{u} \mathcal{H}(u),
\end{equation}
where $\mathcal{H}$ is the Hamiltonian smooth function of $u$, and $\nabla_{u}$ is the gradient operator. The solutions of \eqref{eq.phys.1} describe  flows with conservation of energy because the skew-symmetry property of $J$ imposes the flow of $u$ to be orthogonal to the gradient of the energy. As a consequence, any solution $u(t)$ of \eqref{eq.phys.1} corresponding to initial conditions $u(0)=u_0$ fulfills $\mathcal{H}(u(t))=\mathcal{H}(u_0)$. A very useful  problem is the study of the dynamics of solutions with initial conditions in a bounded neighborhood of the equilibrium points $\Phi(t)$ of \eqref{eq.phys.1}, that is $||u_0 - \Phi||\ll 1$, where $\nabla_{u} \mathcal{H}(\Phi)=0$. The stability of these critical points  can be analyzed by expanding the Hamiltonian in Taylor series around $\Phi + \delta \Phi$, \cite{booki}, and analyzing the linearization of the flow around $\Phi$
\begin{equation}
\label{eq.phys.2}
J (\delta \Phi)_{t}=  L \delta \Phi +\mathcal{O}(||\delta \Phi||^2),
\end{equation}  
where the linear operator $L_{ij}=-\partial^2 \mathcal{H}(\Phi) / \partial u^i \partial u^j$, $i,j=1,\dots 2N$ is the symmetric Hessian matrix for $\mathcal{H}$. This linearization is exactly our study case \eqref{ca} so we can assume $H=L$. The skew-symmetry properties of $J$ 
and symmetry of $H$ guarantee that the point spectrum of $J^{-1}L$ is symmetric with respect to the real and imaginary axes, $ \lambda \in \hbox{Sp}(JL) \rightarrow -\lambda, \pm \lambda^{*} \in \hbox{Sp}(JL)$ and hence either the critical points are linear exponentially unstable or the linear stability problem is irrelevant (imaginary spectrum),  \cite{booki}, which means that the critical points of the linearized equation \eqref{eq.phys.2}  are not likely to be asymptotically stable. One possibility to ensure the stability of a critical point is to ask for the critical point $\Phi$ to be a non-degenerate minima of $\mathcal{H}$.  However, this would  request for $L$ to be a positive-definite matrix, case which is not covered by our system \eqref{ca} since $H$ is  requested to be positive semidefinite only. Indeed, if we consider $\alpha_{-}$ to be the lower bound of the point spectrum of $L$, it results 
$$
|| \delta \Phi (t) ||^2 \le 2\frac{H(u (t))-H(\Phi)}{\alpha_{-}}=2\frac{ H(u_0)-H(\Phi)}{\alpha_{-}},
$$
where equality is attained for the eigenvectors of $L$. If the point spectrum of $L$ contains however zeroes, the upper bound in the right hand side blows up to infinity, consequently level sets for $\mathcal{H}$ infinitesimally closed to $\Phi$ become  unbounded hyperbolas, and the critical point becomes unstable.

A stronger result can be obtained if the  linearization $L$ of the Hamiltonian $\mathcal{H}$ is a periodic function of time. In this case we can amend the linear stability problem by using  Theorem \eqref{theo.2.10}. We can write the parameter $z=\rho e^{i \phi}$ and interpret $\rho$ as a time re-scaling and the complex phase as a $U(1)$ symmetry for $H$. We have
\begin{corollary}
\label{c1}
If  \eqref{pca} describes a periodic linearization of a Hamiltonian equation \eqref{eq.phys.1} around any of its critical points, then for any complex $z$ there is at least one continuous perturbation which is either exponentially stable or marginally stable.
\end{corollary}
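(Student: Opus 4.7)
The plan is to apply Theorem \ref{theo.2.10} essentially verbatim to the linearized Hamiltonian equation \eqref{eq.phys.2} with $H=L$, using only the hypothesis that the Hessian $L(t)$ is $p$-periodic in time. First I would set the stage by rewriting \eqref{eq.phys.2} as $J(\delta\Phi)'=zL(t)\delta\Phi$, where the complex parameter $z=\rho e^{i\phi}$ enters as the spectral parameter of \eqref{pca}. The modulus $\rho$ is reabsorbed as a time rescaling $t\mapsto \rho t$, and the phase $e^{i\phi}$ is the $U(1)$ symmetry noted just before the statement, so the linearized flow around $\Phi$ falls exactly into the framework of \eqref{pca}.

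Next I would invoke Theorem \ref{theo.2.10} for the chosen $z\in\C$. The theorem delivers a nontrivial solution $u(t,z)$ in exactly one of two alternative regimes. In the first regime $|\lambda|<1$ for some characteristic multiplier, and the corresponding solution has the Floquet form $u(t,z)=e^{\mu t/p}v(t,z)$ with $v$ periodic and $\Re(\mu)<0$, so $\|u(t,z)\|\to 0$; this is precisely an exponentially stable perturbation around $\Phi$. In the second regime the multiplier satisfies $|\lambda|=1$, so $\mu=i\theta$ is purely imaginary and $u(t,z)=e^{i\theta t/p}v(t,z)$ is pseudo-periodic; this is a marginally stable perturbation, bounded in norm for all times but not decaying. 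The dichotomy between the two regimes is exhaustive by Theorem \ref{theo.2.10}, which proves the corollary.

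The only subtle point worth flagging is that Theorem \ref{theo.2.10} was stated under the standing assumption that $H$ is positive semidefinite, while in the Hamiltonian application $L$ need only be symmetric — the sign-definiteness may fail precisely at unstable critical points. However, the step that actually matters in the proof of Theorem \ref{theo.2.10} is purely symplectic: the monodromy matrix $C$ is symplectic, its spectrum is invariant under $\lambda\mapsto 1/\lambda$, and hence at least one multiplier lies in $\{|\lambda|\le 1\}$. This algebraic fact uses symmetry of $H$ through $\operatorname{tr}(JH)=0$ (which still holds for $L$) but does not require semidefiniteness, so the corollary goes through without obstruction. I therefore expect no real difficulty in the argument; the only care needed is to state this reduction cleanly so that the reader sees the corollary really is a direct consequence of Theorem \ref{theo.2.10}, modulo the identification $H=L$ and the interpretation of $z=\rho e^{i\phi}$.
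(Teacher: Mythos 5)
Your proof matches the paper's: both apply Theorem \ref{theo.2.10} directly and read off the two Floquet alternatives --- $|\lambda|<1$ yields a solution $e^{\mu t/p}v(t)$ with $\Re(\mu)<0$, i.e.\ exponential decay (exponential stability), while $|\lambda|=1$ yields a bounded pseudo-periodic solution (marginal stability). Your additional remark that positive semidefiniteness of $L$ can be dispensed with is a bit loose: the existence theory underlying Theorem \ref{theo.2.10} (Lemma \ref{lemma 2.1} and the contraction argument of Theorem \ref{thm1}) does use that $H$ is positive semidefinite and trace-normed, so as written the corollary inherits the paper's standing hypothesis that $H=L$ be positive semidefinite --- a restriction the paper itself acknowledges in the paragraph preceding the statement.
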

The proof results immediately by applying \eqref{theo.2.10}. The negative real part of the Floquet exponent for cases with $|\lambda | <1$ ensures that the perturbation approaches zero in norm faster than, or at least as fast as an exponential, so we have 
an exponentially stable critical point. For the cases with $|\lambda |=1$ the perturbation is pseudo-periodic, and being continuous it is bounded in norm, so we have a marginally stable critical point \cite{marsta}. An example from classical mechanics of such a situation is given by the
existence of pseudo-periodic solutions for the dynamics of a satellite in a nearby of a geostationary orbit. Similar pseudo-periodic solutions were obtained in \cite{strel} by using a generalized version of the Weinstein-Moser theorem. In this example, instead of periodic linear Hamiltonians, the author uses the theory of compact Lie group invariant Hamiltonian. The $z-$independence of our result in \eqref{theo.2.10}, \eqref{c1} provides in our case $U(1)$-invariance, fact  which is in agreement with Theorem 1.1 in \cite{strel} applied to compact groups. At the same time, our result is distinct from the cases studied in \cite{marsta,strel} because the periodicity of our $L$ is a non-compact symmetry, so it does not fall under the cases presented in this literature. 

\subsection{Position-dependent effective mass example}

While position-dependent (effective) mass quantum mechanical systems
have repeatedly received attention in many areas of physics due to their relevance in describing the physics of many micro-structures of current interest such as, in the last decade, in problems of quantum dots \cite{PV2}, intelligent states \cite{PV3}, Aharonov-Bohm-Coulomb systems \cite{PV4}, Fermi gas \cite{PV5}, Dirac equation \cite{PVa}, and SUSY methods for superconductor systems \cite{PV6}, etc. (see also \cite{PV7}-\cite{PV9} for theoretical developments and references therein), interest in classical problems having a position-dependent variable mass is relatively recent and rapidly developing subject \cite{PV10}-\cite{PV13}. The simplest case of a position-dependent mass 
classical oscillator has been approached by \cite{PV10,PV11}, for 
the analysis of a Duffing oscillator which provides ground for interesting effects like bifurcations, and chaos.
\begin{figure}
	\includegraphics[scale=.6]{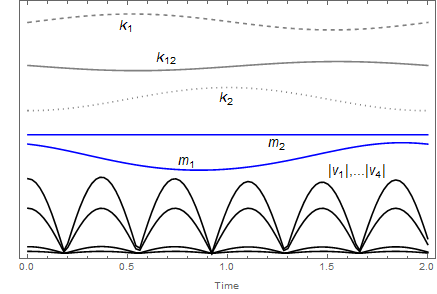}
	\centering
	\caption{First five curves from above: time dependence of the periodic ($p=2s$) entries of the $N=1$ Hamiltonian matrix \eqref{eq.hamaq}. The bottom (black) four curves represent the absolute values of the pseudo-periodic components of the solution $u(t)$, for $z=0.9, \theta=8.54$.}
	\label{fig}
\end{figure}
We introduce a classic equivalent of the \textit{de Roos} periodic time-dependent mass Hamiltonian \cite{PV7,PV11}, for two degrees of freedom $N=1$, in the form
\begin{equation}
\label{eq.hamaq}
H(t)=
\begin{pmatrix}
k_1 +k_{12} & -k_{12} & 0 & 0 \\
-k_{12} & k_2 +k_{12} & 0 & 0 \\
0 & 0 & m_{1}^{-1} & 0 \\
0 & 0 & 0 & m_{2}^{-1}
\end{pmatrix}.
\end{equation}
If this Hamiltonian is periodic, $H(t)=H(t+p)$, it can describe the evolution of two quasi-particles (impurities in super-fluid or Leidenfrost drops, or atomic clusters) under the action of a time variable (periodic) potential, with secondary effect an induced time-dependent relative interaction between the quasi-particles. In our application  we substitute the explicit position-dependence mass, with implicit time-dependence of mass and the other entries of the Hamiltonian matrix. \eqref{eq.hamaq} describes the evolution of the positions and momenta of two such quasi-particles $u(t)=(q_1, q_2, p_1, p_2)$. The  periodic coefficients $k_{1,2}(t)$ and $k_{12}(t)$ are time-variable elastic constants describing a linearized interaction between the quasi-particles and the external potential, also their mutual interaction through $q_1 - q_2$, respectively, and  $m_{1,2}(t)$ are the periodic mass coefficients. To exemplify how our results apply to this model, we build quasi-periodic solutions of the form $u(t;\theta)=e^{i \theta t} v(t)$, with periodic $v(t)=v(t+p)$, where $v$ does not dependent on $\theta$. We write \eqref{eq.hamaq} in the form $JH\rightarrow \theta  J H(t)+G_{0}(t)$ and solve the eigenproblem $JH(t) v(t)=\lambda v(t)$ asking the eigenvalue to be time- and $\theta-$independent, \textit{i.e.} $\lambda=-i z^{-1}=$const. We notice that for any periodic coefficients $k_{1,2}, m_{1,2}$ the invariance of the spectrum is guaranteed by the constraint
$$
k_{12}=\frac{z^2 k_1 k_2 -k_2 m_1 - k_1 m_2 +m_1 m_2 z^{-2}}{m_1 +m_2 -z^2 (k_1 + k_2)}.
$$
By implementing this value for $k_{12}$ in the eigenvectors $v$ we can find  solutions for $G_0$ from the condition $v'=-z G_0 v$. For a Hamiltonian given by $J H(t)+G_{0}(t)$ with $G_0$ obtained as above, the corresponding $u(t;\theta)$ solution of the system \eqref{pca} is indeed pseudo-periodic. The exact expression of $u$ is too long to be presented here, but we show in Fig. \eqref{fig} the time-dependence of the Hamiltonian matrix entries of period $p=2s$, and of the components of the solution $u(t)$ whose period spectrum has $p\pm \theta$ resonances. We mention that our construction is $p-$ and $\theta-$independent, through Theorem \eqref{pca}, so it represents a general approach, demonstrating the existence of the pseudo-periodic solutions for this type of Hamiltonians.

\subsection{Nonlinear water waves}

Nonlinear traveling waves on the free surface of an ideal fluid are the focus of a huge number of papers on the subject. The strongest verification of any analytic or numeric solution in terms of such waves is to show its matching with numerical solutions of the Euler equations for ideal fluids. By using a reformulation of the Euler equations for capillary gravity waves, involving surface integrals and a variational argument inspired by the Weinstein-Moser theorem, the authors in \cite{craig} were able to provide rigorous existence criteria for periodic traveling wave solutions in two and three dimensions. In two dimensions this was proven by a  straightforward approach of the theorems of Levi-Civita-Struik \cite{waw}. For three dimensions,
on top of the existence of periodic traveling capillary gravity water waves obtained in \cite{craig}, the existence of pseudo-periodic waves was also proved in \cite{quasi}. These results were obtained  by using of the resonant Lyapunov center theorem coupled with the Lyapunov–Schmidt method. 

However, the linearized Hamiltonians in both cases are constant. The periodicity of the solutions in \cite{craig} occurs from the truncation of the space in periodic intervals, using periodic boundary conditions and hence working the system, reduced now to the dynamics of the free surface,  on a torus manifold. The quasi-periodic waves from \cite{quasi} were obtained through the linearization of the Euler equation Hamiltonian by using a pseudo-periodic operator built in the KAM theory sense. Both these directions however, avoided the situation in which
the linearized operators have zero eigenvalue of higher multiplicity, thus involving the occurrence of small denominator problems. These are situations where more than  one or two solutions of the linearized equation have the same phase velocity, situations which actually are common in real experiments, especially when the surface tension of the fluid becomes very large (the Bond number approaches zero).

In our example we use a different approach towards the linearization of the Euler equations, allowing the phase velocity of the waves to be time dependent, like for example in long waves in the Benjamin-Fair Modulation Instability situations \cite{BFI}. By using the results proved in section 2 we demonstrate the existence of pseudo-periodic water waves, in agreement with cited literature. For a two dimensional body of water with finite depth $h$ Euler equations reduce the Bernoulli equation for the water velocity potential $\phi(x,y,t)$
\begin{equation}
\label{eq.model}
\phi_t +\frac{1}{2}|\nabla \phi|^2 +g \eta -\sigma \nabla \cdot \biggl[ \frac{\eta_x}{\sqrt{1+\eta_{x}^2}}\biggr]=0, \hbox{at} \ y=\eta(x,t),
\end{equation}
where the free surface of the water is described by $\eta(x,t)$, $\sigma$ is the surface tension coefficient and $g$ is gravitational acceleration. The water velocity  is potential $\vec{V}=\nabla \phi$, and we add to \eqref{eq.model} boundary conditions at the rigid bottom $\phi_y =0$ at $y=-h$, and nonlinear kinematic free surface boundary condition
\begin{equation}
\label{eq.bc}
\eta_t +\phi_x \eta_x -\phi_y =0, \hbox{at} \ y=\eta(x,t).
\end{equation}
When the free surface $\eta(x,t)$ and Dirichlet boundary condition at water surface for $\tilde{\phi}(x,t)=\phi(x,\eta(x,t),t)$ are given, one can  solve the full problem, since $\phi$ satisfies Laplace’s equation. In this way the water wave problem reduces  to a Hamiltonian system in the surface variables $\eta$ and   $\tilde{\phi}$ 
which are canonically conjugate \cite{craig}. The linearized Hamiltonian for the Fourier transform $F[\cdot]$ of the canonical variables becomes
\begin{equation}
\label{eq.hama}
\begin{pmatrix}
F[\eta]_t  \\
F[\tilde{\phi}]_t 
\end{pmatrix}
=\begin{pmatrix}
g+\sigma k^2 & -i \vec{c}(t)\cdot \vec{k} \\
i \vec{c}(t)\cdot \vec{k} & k \tanh{h k}
\end{pmatrix}
=
\begin{pmatrix}
F[\eta]  \\
F[\tilde{\phi}] 
\end{pmatrix},
\end{equation} 
where $\vec{c}=(c_x,c_y )$ is the wave phase velocity which is considered here time dependent as in the most general situation of mixing of waves \cite{BFI}, and $\vec{k}=(k_x,k_y )$ is the wave vector. The system \eqref{eq.hama} is a $2 \times 2$ linear Hamiltonian system like the one in \eqref{pca} if we assume periodic function for the phase velocity. In this case \eqref{eq.hama} obeys Theorem \eqref{theo.2.10}, namely, for appropriate initial conditions  it  accepts pseudo-periodic solutions in velocity potential and surface function. The role for the arbitrary parameter $z$ can be attributed to the surface tension coefficient $\sigma$, or the wavelength $k$. This direct application is in agreement with the KAM theory for capillary-gravity water waves presented in \cite{craig,quasi}. Our example  is rather general, and not  limited to water waves only, for it can be applied to other dynamical systems like nonlinear waves on networks \cite{waterx}.

\subsection{Dirac systems in semiconductors}

The effect of the dimensions of space upon physics laws is a modern question  having its roots in the efforts of unification of gravity with the standard model, and its potential future in understanding of the dark matter and dark energy puzzles. Extra space dimensions ($N>3$) must be somehow of negligible "size", that is compactified, otherwise we would have seen them already. Nevertheless, higher dimensions for Hamiltonian systems introduce consequences for the stability of matter, question yet without an answer. A similar higher dimensional approach uses the $N-$dimensional Schr\"{o}dinger equation  \cite{41}, in the study of the hydrogen atom in five dimensions, the isotropic oscillator in eight dimensions, or the position and momentum information entropies of $N-$dimensional systems. Recently,  motivational problems towards investigating $n-$dimensional Schr\"{o}dinger equations are inspired by the dimensional expansion technique used to obtain nonperturbative results in quantum field theory where the space-time dimension is used as an expansion parameter, most specifically in the case of renormalization  for a self-interacting scalar quantum field theory in the Ising limit \cite{5}. The oscillatory properties of the solutions of equation (1.1) are involved in a number of problems in  quantum mechanics in the phase space (Wigner formulation)  \cite{4}, and  more recently in the fields of graphene\cite{D1,D4}, electronic materials and Dirac systems \cite{PVa, D2}, and black holes physics \cite{D3}.

Since the advent of graphene as a $2-$dimensional electronic material which can be produced in the laboratory, the possibility of exploiting the valley degree of freedom within it, \cite{2,D1,D4}, and other Dirac systems has been vigorously studied \cite{4}. In order to optically-control electronic semiconductor structures one needs high intensity light exposure of the sample. This light determines the electronic struture to be represented by eigenvalues of a Floquet Hamiltonian. This new nano-science field, vallytronics \cite{1}, is a procedure of quantum manipulation of energy valleys in semiconductors, including quantum computation with valley-based qubits or other forms of quantum electronics.  In analogy to spintronics where the internal degree of freedom of spin is harnessed to store, manipulate and read out bits of information, in valleytronics similar tasks are performed using the multiple extrema of the band structure, so that the information of $0$s and $1$s would be stored as different discrete values of the crystal momentum. For appropriate choices of light beam amplitudes and phase offset one can produce a Floquet energy spectrum which has a gap for one valley but no gap for the other. If an electric current passes through this probe from electrodes  with chemical potentials tunned to have values in this gap of quasi-energy, then the system becomes Floquet-engineered.

In the following example we focus on  Two-dimensional Dirac system. In some time-periodic potential  the electronic states of the semiconductor are described by the time-dependent Schr\"{o}dinger equation. If we denote by $\alpha$ the multi-label of all electronic quantum numbers the solutions obtained from the Floquet theorem have the form
$$
\psi_{\alpha}(\vec{r},t)=u_{\alpha}(\vec{k},t) e^{i (\vec{k} \cdot \vec{r} -\epsilon_{\alpha}t)},
$$
where $T$ is the period of the Hamiltonian $H(t)$, the functions $u_{\alpha}(t+T)=u_{\alpha}(t)$ are the periodic part of the Floquet solution and at the same time the eigenfunctions of the  Floque Hamiltonian $H_{F}(t)=H(t)-i\partial_{t}$ while $\epsilon_{\alpha}$ are the quasi-energies levels and the corresponding eigenvalues of $H_{F}(t)$. The parameters are actually the Floquet characteristic exponents (unique modulo multiples of $\Omega$).
These wavefunctions are two-component spinors encoding the wavefunction
amplitudes on each of the sublattices of the honeycomb lattice. The field is assumed uniform in the plane so the electronic states are characterized by a two-dimensional wave vector $\vec{k}$ within the hexagonal Brillouin zone. The Floquet Hamiltonian has the form (in $\hbar=1$ convention) \cite{1}-\cite{3}
$$
H_{F}(\vec{k},t)=
\begin{pmatrix}
-i\partial_{t} & -\gamma Z(\vec{k},t)  \\
-\gamma Z^{*}(\vec{k},t) & -i\partial_{t} 
\end{pmatrix},
$$
where $\vec{k}$ is the wave vector of the state in the Brillouin zone, and $\gamma$ is the hopping amplitude for electrons in the graphene honeycomb lattice.
The operator $Z$ is given by
$$
Z(\vec{k},t)=\sum_{n=1}^{3}\hbox{exp }\biggl[ i \biggl( \vec{k}+\frac{e}{c} \vec{A}(t) \biggr) \cdot \vec{a}_{n} \biggr],
$$
where $\vec{a}_{n}$ are the nearest neighbors vector of a site on the lattice, and $\vec{A}(t)$ is the oscillating vector potential for the electric field in the light beam and it represents the periodic part of the Hamiltonian $\vec{A}(t)=A_{0} (\cos \Omega t, \sin \Omega t)$ with $\Omega=2 \pi/T$. The Floquet Hamiltonian fulfills the equation $H_{F}(\vec{k},t) u_{\alpha}(\vec{k},t)=\epsilon_{\alpha}(\vec{k}) u_{\alpha}(\vec{k},t)$. Moreover, we notice that the Floquet modes obey the matching relation
$$
u_{\alpha'}(\vec{k},t)=u_{\alpha}(\vec{k},t) e^{i n \Omega t}=u_{n \alpha}(\vec{k},t), \ \ n \in \mathcal{Z},
$$
while the shifted quasienergy values obey 
$$
\epsilon_{\alpha}\rightarrow \epsilon_{\alpha'}=\epsilon_{\alpha}+n \Omega=\epsilon_{n\alpha}.
$$
Hence the label $\alpha$ corresponds to a whole class of solutions indexed $\alpha'=(\alpha, n), \ n \in \mathcal{Z}$ and thus the eigenvalues can be mapped into the first Brillouin zone obeying $-\Omega/2 \le \epsilon < \Omega/2$.

For some special values of the wave vector $\vec{k}$, two symmetrically points placed along $y-$axis in the Brillouin zone by the nearest neighbors, the lattice symmetry combines with the time periodicity and hence the  relevant frequency for the Floquet problem is modified by a constant phase factor in front of $Z$. In such situations distinct states can cross at the Floquet zone boundary $\epsilon_{\alpha}=\pm \Omega/2$ (shown above) leading to topological transitions in the quasienergy band structure. This physical situation is an example of occurrence of the pseudo-periodic solutions for this type of Hamiltonian described  in Theorem \eqref{theo.2.10}.

We mention that the full vector space of the electron states is needed to allow matching of wavefunctions to external states at the boundary of the semiconductor sample. Thus, to deal with such systems one must treat the time degree of freedom as a
genuine extra dimension, compactified via the periodic
(temporal) boundary condition, which is very much in the spirit of our comment in the beginning of this section about the compactified (negligible "size") extra space dimensions. In such composite Hilbert space $\mathbf{R}^2 \otimes \mathcal{T}$ of square integrable functions on configuration space and the space $\mathcal{T}$ of functions periodic of period $2 \pi/\Omega$ the quasienergy does not depend on an extra constant phase in the electric field perturbation (the phase is equivalent to a shift in time origin), while the time-dependent Floquet functions depend on such phase at any given fixed time \cite{2,3}. Consequently, the quasienergy eigenvalue equation has the form of the time independent Schr\"{o}dinger equation in the composite Hilbert space. This feature reveals the advantage of the Floquet formalism.


\begin{thebibliography}{00}
\bibitem{db} 
L. de Branges, Hilbert Spaces of Entire Functions (Prentice-Hall, Englewood Cliffs 1968). 
\bibitem{cre} 
C. Remling, Spectral Theory of Canonical Systems (De Gruyter, $1^{st}$ edition,  2018).
\bibitem{PV2}
Y. \"{O}zlem, The quantum effective mass Hamilton–Jacobi problem
 \textit{J. Phys. A: Math. Theor.} \textbf{43} (2010) 095305. 
\bibitem{as}
L. A. Sakhnovich, Spectral Theory of Canonical Differential Systems. Method of Operator Identities in \textit{Operator Theory: Advances and Applications} (Springer, 1999).
\bibitem{Jack} 
J. K. Half, Ordinary Differential Equations (Robert E. Krieger Publishing Company Inc. 1980).
\bibitem{booki}
T. Kapitula and K. Promislow, Spectral and Dynamical Stability of Nonlinear Waves (Springer, Heidelberg 2013).
\bibitem{marsta}
J. S. Langer,  and H. M\"{u}ller-Krumbhaar, Theory of dendritic growth—I. Elements of a stability analysis, \textit{Acta Metallurgica} \textbf{26}, 11 (1978) 1681-1687.
\bibitem{strel}
D. Strzelecki, Periodic Solutions of Symmetric Hamiltonian Systems, \textit{Arch. Rational Mech. Anal.} \textbf{237} (2020) 921-950.
\bibitem{PV3}
V. C. Ruby and M. Senthilvelan, On the generalized intelligent states and certain related nonclassical states of a quantum exactly solvable nonlinear oscillator, \textit{J. Phys. A: Math. Theor.} \textbf{43} (2010) 415301.
\bibitem{PV4}
R. R. S. Oliveira, A. A. A. Filho, R. V. Maluf and C. A. S. Almeida, The relativistic Aharonov–Bohm–Coulomb system with position-dependent mass, \textit{J. Phys. A: Math. Theor.} \textbf{53} (2020) 045304.
\bibitem{PV5}
Y. Hamdouni, Motion of position-dependent effective mass as a damping–antidamping 
process: application to the Fermi gas and to the Morse potential, \textit{J. Phys. A: Math. Theor.} \textbf{44} (2011) 385301. 
\bibitem{PVa}
H. Panahi and Z. Bakhshi, Dirac equation with position-dependent effective mass 
and solvable potentials in the Schr\"{o}dinger equation, \textit{J. Phys. A: Math. Theor.} \textbf{44} (2011) 175304. 
\bibitem{PV6}
A. Ganguly and L. M. Nieto, Shape-invariant quantum Hamiltonian with position-dependent effective mass through second-order supersymmetry, \textit{J. Phys. A: Math. Theor.} \textbf{40} (2007) 7265.
\bibitem{PV7}
B. Bagchi, P. Gorain, C. Quesne, and R. Roychoudhury, New approach to (quasi-) exactly solvable Schrödinger equations with a position-dependent effective mass, \textit{Europhys. Lett.} \textbf{72}, 2 (2005) 155.
\bibitem{PV8}
J. P. Killingbeck, The Schr\"{o}dinger equation with position-dependent mass,
\textit{J. Phys. A: Math. Theor.} \textbf{44} (2011) 285208.
\bibitem{PV9}
M. A. Rego-Monteiro, L. M. C. S.  Rodrigues and E. M. F. Curado,
Position-dependent mass quantum Hamiltonians: general approach and duality, \textit{J. Phys. A: Math. Theor.} \textbf{49} (2016) 125203.
\bibitem{PV10}
S. Cruz y Cruz and O. Rosas-Ortiz, Position-dependent mass oscillators and coherent states, \textit{J. Phys. A: Math. Theor.} \textbf{42} (2009) 185205.
\bibitem{PV11}
B. Bagchi, S. Das, S. Ghosh and S. Poria, Nonlinear dynamics of a position-dependent mass-driven Duffing-type oscillator, \textit{J. Phys. A: Math. Theor.} \textbf{46} (2013) 032001.
\bibitem{PV12}
S. Cruz y Cruz and O. Rosas-Ortiz, \textit{J. Phys. A: Math. Theor.} \textbf{42} (2009) 185205
\bibitem{PV13}
A. Venkatesan and M. Lakshmanan, \textit{Phys. Rev. E} \textbf{55} (1997) 5134.
\bibitem{craig}
W. Craig and D. P. Nicholls, Traveling Two and Three Dimensional Capillary Gravity Water Waves, \textit{SIAM J. Math. Anal.} \textbf{32}, 2 (2000) 323-359.
\bibitem{waw}
T. Levi-Civita, D\'{e}termination rigoureuse des ondes permanentes d’ampleur finie, \textit{Math. Ann.} \textbf{93} (1925) 264–314; D. Struik, D\'{e}termination rigoureuse des ondes irrotationnelles p\'{e}riodiques dans un canal \`{a}
profondeur finie, \textit{Math. Ann.,} \textbf{95} (1926) 595–634.
\bibitem{quasi}
M. Berti and R. Montalto, Quasi-periodic standing wave solutions of gravity-capillary water waves, preprint $arXiv:1602.02411$ (2016).
\bibitem{BFI}
O. Chvartatskyi and F. M\"{u}ller-Hoissen, NLS breathers, rogue waves, and solutions of the Lyapunov equation for Jordan blocks, \textit{J. Phys. A: Math. Theor.} \textbf{50} (2017) 155204. 
\bibitem{waterx}
J. G. Caputo, I. Khames, A. Knippel and P. Panayotaros, Periodic orbits in nonlinear wave equations on networks, \textit{J. Phys. A: Math. Theor.} \textbf{50} (2017) 375101.
\bibitem{41}
T. Das, and A. Arda, Exact Analytical Solution of the-Dimensional Radial Schr\"{o}dinger Equation with Pseudoharmonic Potential via Laplace Transform Approach (Advances in High Energy Physics, 2015) $http://arxiv.org/abs/1308.5295v4$.
\bibitem{5}
C. M. Bender, S. Boettcher, and L. Lipatov, \textit{Phys. Rev. Lett.} \textbf{68} 3674 (1992).
\bibitem{4}
K. Habermann and L. Haberman, Introduction to Symplectic Dirac Operators (Springer, Heidelberg 2006).
\bibitem{D1}
A. Schulze-Halberg and P. Roy, Construction of zero-energy states in graphene through the supersymmetry formalism, \textit{J. Phys. A: Math. Theor.} \textbf{50} (2017) 365205.
\bibitem{D4}
A. Cortijo, F. Guinea and M. A. H. Vozmediano, Geometrical and topological aspects of graphene and related materials, \textit{J. Phys. A: Math. Theor.} \textbf{45} (2012) 383001. 
\bibitem{D2}
A. Ishkhanyan and V. Jakubsk\`{y}, Two-dimensional Dirac fermion in presence of an asymmetric vector potential, \textit{J. Phys. A: Math. Theor.} \textbf{51} (2018) 495205. 
\bibitem{D3}
F. Belgiorno, S. L. Cacciatori, F. Dalla Piazza and O. F. Piattella, Quantum properties of the Dirac field on BTZ black hole backgrounds, \textit{J. Phys. A: Math. Theor.} \textbf{44} (2011) 025202. 
\bibitem{2}
A. Kundu, H. A. Fertig, and B. Seradjeh, Floquet-Engineered Valleytronics in Dirac Systems, \textit{Phys. Rev. Lett.}, \textbf{116}, 1 (2016) 016802.
\bibitem{1}
C. H. Yang, A. Rossi, R. Ruskov, N. S. Lai, F. A. Mohiyaddin, S. Lee, C. Tahan, G. Klimeck, and A. Morello, Spin-valley lifetimes in a silicon quantum dot with tunable valley splitting, \textit{Nature Communications} \textbf{4}: (2013) 2069. 
\bibitem{3}
Q. Chen,  D. Liang Du, and G. A. Fiete, Floquet band structure of a semi-Dirac system, \textit{Phys. Rev. B} \textbf{97}, 3 (2018) 035422.

\end{thebibliography}
\end{document}